\numberwithin{equation}{section}
\newtheorem{Definition}{Definition}[section]
\newtheorem{Theorem}[Definition]{Theorem}
\newtheorem{Corollary}[Definition]{Corollary}
\begin{document}

\title{\bf b-lattice of nil-extensions of rectangular skew-rings}
\author{\textbf{ S. K. Maity and R. Chatterjee}}
\date{}

\maketitle

\vspace{-2em}
\begin{center}
{\small \it Department of Mathematics, University  of  Burdwan,}\\
{\small \it Golapbag, Burdwan-713104, India.}\\
{\small e-mail: skmaity@math.buruniv.ac.in}
\end{center}

\begin{abstract}
 {\footnotesize  \noindent Every quasi completely regular semiring
 is a b-lattice of completely Archimedean semirings, i.e., a b-lattice
 of nil-extensions of  completely simple semirings. In this paper
 we consider the semiring which is a b-lattice of nil-extensions of
 orthodox completely simple semirings.}
\end{abstract}

AMS Mathematics Subject Classification (2010): 16A78, 20M10, 20M07.

\noindent \textbf{Keywords} : Completely simple semiring,
completely Archimedean semiring, rectangular skew-ring,
left skew-ring, orthodox, b-lattice, nil-extension.

\section{Introduction}
Semiring is one of the many concepts of universal algebra which is
an established and  recognized area of study. The structure of
semirings has been studied by many authors, for example, by F.
Pastijn, Y. Q. Guo, M. K. Sen, K.  P. Shum and others
(\cite{past},  \cite{sms}). In \cite{mg} the authors studied
properties of quasi completely regular semirings. They proved
that a semiring is a quasi completely regular semiring if and
only if it is a b-lattice of completely Archimedean semirings. In
\cite{skmg}, the authors proved that a semiring is a completely
Archimedean semiring if and only if it is nil-extension of a
completely simple semiring. In this paper we intend to study the
semirings which are b-lattice of orthodox completely Archimedean
semirings. The preliminaries and prerequisites we need for this
article are discussed in section 2. In section 3, we discuss our
main result.

\section{Preliminaries}

A \emph{semiring} $(S,+,\cdot)$ is a type (2, 2)-algebra whose
semigroup reducts $(S,+)$ and $(S,\cdot)$ are connected by ring
like distributivity, that is, $a(b+c) = ab+ac$ and $(b+c)a =
ba+ca$ for all $a,b,c \in S$. An element $a$ in a semiring $(S,
+, \cdot)$ is said to be \emph{additively regular} if there exists
an element $x \in S$ such that $a = a+x+a$. An element $a$ of a
semiring $(S,+,\cdot)$ is called \emph{completely regular}
\cite{sms} if there exists an element $x \in S$ such that $a =
a+x+a$, $a+x = x+a$ and $a(a+x) = a+x$. We call a semiring
$(S,+,\cdot)$\emph{ completely regular} if every element of $S$
is completely regular.  A semiring $(S,+,\cdot)$ is called a
\emph{skew-ring} if its additive reduct $(S,+)$ is an arbitrary
group.  A semiring $(S,+,\cdot)$ is called a \emph{b-lattice} if
$(S, \cdot)$ is a band and $(S,+)$ is a semilattice. If both the
reducts $(S,+)$ and $(S,\cdot)$ are bands, then the semiring $(S,
+, \cdot)$ is said to be an \emph{idempotent semiring}. An
element $a$ in a semiring $(S, +, \cdot)$ is said to be
\emph{additively quasi regular} if there exists a positive
integer $n$ such that $na$ is additively regular. An element $a$
in a semiring $(S, +, \cdot)$ is said to be \emph{quasi
completely regular} \cite{mg} if there exists a positive integer
$n$ such that $na$ is completely regular. Naturally, a semiring
$(S, +, \cdot)$ is said to be quasi completely regular if all the
elements of $S$ are quasi completely regular.

A semigroup $S$ is a \emph{rectangular band} if for every $a,x
\in S$, $a=axa$. An element $x$ of a semigroup $S$ is a \emph{left
zero} of $S$ if $xa=x$ for all $a \in S$. A semigroup all of whose
elements are left zeros is said to be a \emph{left zero
semigroup}. A \emph{rectangular group} is isomorphic to a direct
product of a rectangular band and a group. Again a \emph{left
group} is isomorphic to a direct product of a left zero semigroup
and a group.  A semigroup $S$ in which the idempotents form a
subsemigroup is \emph{orthodox}. A completely simple orthodox
semigroup is a rectangular group, and conversely. We naturally
aim at extending the concepts of rectangular group and left group
to that of semirings.

Throughout this paper, we always let $E^+(S)$ and $Reg^+S$
respectively be the set of all additive idempotents and the set of
all additively regular elements of the semiring $S$. Also we
denote the set of all additive inverse elements of an additively
regular element $a$ in a semiring $(S, +, \cdot)$ by $V^+(a)$. As
usual, we denote the Green's relations on the semiring $(S, +,
\cdot)$ by $\mathscr{L}$,  $\mathscr{R}$, $\mathscr{D}$,
$\mathscr{J}$ and $\mathscr{H}$ and correspondingly, the
$\mathscr{L}$-relation, $\mathscr{R}$-relation,
$\mathscr{D}$-relation, $\mathscr{J}$-relation and
$\mathscr{H}$-relation on $(S, +)$ are denoted by
$\mathscr{L}^+$, $\mathscr{R}^+$, $\mathscr{D}^+$,
$\mathscr{J}^+$ and $\mathscr{H}^+$, respectively. In fact, the
relations $\mathscr{L}^+$, $\mathscr{R}^+$, $\mathscr{D}^+$,
$\mathscr{J}^+$ and $\mathscr{H}^+$ are all congruence relations
on the multiplicative reduct $(S, \cdot )$. Thus if any one of
these happens to be a congruence on $(S, +)$, it will be a
congruence on the semiring $(S, +, \cdot)$. We further denote the
Green's relations on a quasi completely regular semigroup as
$\mathscr{L^*}$,$\mathscr{R^*}$, $\mathscr{D^*}$,$\mathscr{J^*}$
and $\mathscr{H^*}$. For other notations and terminologies not
given in this paper, the reader is referred to the texts of Howie
~\cite{h} \& Golan~\cite{g}. Now we state some important
definitions and results on quasi completely regular semirings and
completely simple semirings.

\begin{Definition}
~\cite{sms} A completely regular semiring $(S,+,\cdot)$ is called
a completely simple semiring if $\mathscr{J}^+ = S \times S$.
\end{Definition}

\begin{Theorem}
\cite{smw} \label{rmr} Let $R$ be a skew-ring, $(I, \cdot)$ and
$(\Lambda, \cdot)$ are bands such that $I \cap \Lambda = \{ o \}$.
Let $P = (p_{_{\lambda, i}})$ be a matrix over $R$, $i \in I$,
$\lambda \in \Lambda$ and assume

1.  $p_{\lambda,o} = p_{o,i} = 0$,

2.  $p_{\lambda \mu, k j} = p_{\lambda \mu, i j} - p_{\nu \mu, i
j} + p_{\nu \mu, k j}$,

3.  $p_{\mu \lambda, jk} = p_{\mu \lambda, j i} - p_{\mu \nu, j i}
+ p_{\mu \nu, j k}$,

4.  $a p_{\lambda, i} = p_{\lambda, i} a = 0$,

5.  $ab + p_{o \mu, i o} = p_{o \mu, i o} + ab$, \label{123}

6.  $ab + p_{\lambda o, o j} = p_{\lambda o, o j} + ab$,
\label{234} \, \,  for all $i, j, k \in I$, $\lambda, \mu, \nu
\in \Lambda$ and $a,b \in R$.

\noindent Let $\mathscr{M}$ consist of all the elements of $I
\times R \times \Lambda$ and define operations $`+$' and
`$\cdot$' on $\mathscr{M}$ by

\vspace{-1.5em}
\begin{center}
$(i,a,\lambda) + (j,b,\mu) = (i, a+ p_{\lambda,j} +b, \mu)$\\
\end{center}
\vspace{-1.2em} and \vspace{-1.5em}
\begin{center}
$(i,a,\lambda) \cdot (j,b,\mu) = (ij, -p_{\lambda \mu, ij} + ab, \lambda \mu)$.
\end{center}

\noindent Then $(\mathscr{M},+,\cdot)$ is a completely simple
semiring. Conversely, every completely simple semiring is
isomorphic to such a semiring.
\end{Theorem}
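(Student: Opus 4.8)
The plan is to prove the two implications separately; the forward implication is a (lengthy but routine) verification, and essentially all the content sits in the converse.

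\emph{Direct part.} I would check, one axiom at a time, that $(\mathscr{M},+,\cdot)$ is a completely simple semiring. By construction $(\mathscr{M},+)$ is exactly a Rees matrix semigroup over the group $(R,+)$ with sandwich matrix $P$, condition 1 being the usual normalization; hence $(\mathscr{M},+)$ is a completely simple semigroup, so it is a union of groups and $\mathscr{J}^+=\mathscr{M}\times\mathscr{M}$. Associativity of the product, after expanding, reduces to the equalities $p_{\lambda\mu,ij}\,c=a\,p_{\mu\nu,jk}=0$, which follow from condition 4 (together with $0x=x0=0$ in a skew-ring) and the associativity of the bands $I$ and $\Lambda$; each of the two distributive laws, after expanding and deleting the terms $a\,p_{\lambda,i}$ and $p_{\lambda,i}\,a$ via condition 4, reduces to an identity among entries of $P$ that is precisely one of conditions 2, 3 (combining sandwich entries) and 5, 6 (commuting the ring product $ab$ past those entries). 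Finally, the sum of $(i,a,\lambda)$ with its additive inverse inside its $\mathscr{H}^+$-class is the additive idempotent $(i,-p_{\lambda,i},\lambda)$, and a short computation using $a\,p_{\lambda,i}=0$ gives $(i,a,\lambda)\cdot(i,-p_{\lambda,i},\lambda)=(i,-p_{\lambda,i},\lambda)$, which supplies the extra identity in the definition of a completely regular semiring. Hence $\mathscr{M}$ is completely regular, and therefore a completely simple semiring.

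\emph{Converse.} Let $(S,+,\cdot)$ be completely simple. Then $(S,+)$ is a completely regular semigroup with $\mathscr{J}^+=S\times S$, hence completely simple, so by the Rees theorem $(S,+)$ is isomorphic to a Rees matrix semigroup over a group $G$ with index sets $I,\Lambda$ and a structure matrix $P$; renaming indices I arrange $I\cap\Lambda=\{o\}$ and $p_{\lambda,o}=p_{o,i}=0$ (condition 1), and identify $S$ with $I\times G\times\Lambda$ carrying the Rees addition. Next I would record that every additive idempotent $e$ of $S$ is a multiplicative idempotent: an element $x$ witnessing complete regularity of $e$ has $e+x$ idempotent and $\mathscr{H}^+$-related to $e$, hence $e+x=e$, and the third defining identity then gives $e\cdot e=e+x=e$. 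Since $\mathscr{R}^+$ and $\mathscr{L}^+$ are congruences on $(S,\cdot)$, the quotients $S/\mathscr{R}^+\cong I$ and $S/\mathscr{L}^+\cong\Lambda$ inherit associative idempotent operations, so $I$ and $\Lambda$ become bands, and the first (resp.\ last) coordinate of a product in $S$ depends only on the first (resp.\ last) coordinates of the factors. I would then check that $R:=\{o\}\times G\times\{o\}$ is a subsemiring whose additive reduct is the group $\mathscr{H}^+$-class of $(o,0,o)$; thus $R$ is a skew-ring, and transporting its multiplication along $a\mapsto(o,a,o)$ gives a skew-ring structure on $G$. Writing $u_i:=(i,0,o)$, $v_\lambda:=(o,0,\lambda)$ (both additive idempotents) and $r_a:=(o,a,o)$, one has $(i,a,\lambda)=u_i+r_a+v_\lambda$, and the entries of $P$ are recovered from $(o,p_{\lambda,i},o)=v_\lambda+u_i$.

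The heart of the converse is to expand $(u_i+r_a+v_\lambda)\cdot(u_j+r_b+v_\mu)$ by the distributive laws: eight of the nine resulting products are additive idempotents (because $u_i+u_i=u_i$ and $v_\lambda+v_\lambda=v_\lambda$), hence have completely determined Rees coordinates, while $r_a r_b$ lies in $R$ and yields the ring product $ab$; substituting these and telescoping the Rees sum should collapse the middle coordinate to $-p_{\lambda\mu,ij}+ab$. Carrying this out forces conditions 2--6: condition 4 appears first, since the expansion shows $p_{\lambda,i}\cdot b$ is independent of $b$, hence equals $p_{\lambda,i}\cdot 0=0$; conditions 2 and 3 record how the sandwich entries occurring in the telescoped sum must combine; and conditions 5, 6 are exactly the additive commutativities required to move $ab$ to the front. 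With the multiplication formula in hand, the Rees isomorphism of additive reducts becomes an isomorphism of semirings, completing the proof. I expect the main obstacle to be precisely this coordinate bookkeeping in the converse --- controlling the middle coordinate of a product and reading off conditions 2, 3, 5, 6 --- since $G$ need not be abelian, so the order of every group term must be tracked with care throughout the expansions in both directions.
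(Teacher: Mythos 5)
The paper offers no proof of this statement to compare against: Theorem~\ref{rmr} is imported verbatim from Sen, Maity and Weinert \cite{smw}, and the present paper only records it (together with Corollary~\ref{cor}) as a prerequisite. Judged on its own, your outline is the standard Rees-type argument and its load-bearing steps check out: in the direct part associativity of $\cdot$ does reduce to condition 4 (via $(-p)c=-(pc)$ and $0c=0$ in a skew-ring), and each distributive law reduces to condition 2 or 3 after using conditions 5--6 \emph{together with} Corollary~\ref{cor} (which you need in order to commute $ab$ past a general entry $p_{\lambda\mu,ik}$, not just past $p_{o\mu,io}$ and $p_{\lambda o,oj}$); in the converse, the observations that additive idempotents are multiplicative idempotents, that $\mathscr{H}^{+}$, $\mathscr{R}^{+}$, $\mathscr{L}^{+}$ are multiplicative congruences (so $I$, $\Lambda$ become bands and $R=\{o\}\times G\times\{o\}$ is multiplicatively closed), and the decomposition $(i,a,\lambda)=u_i+r_a+v_\lambda$ with $v_\lambda+u_i=(o,p_{\lambda,i},o)$ are exactly the right devices, and your derivation of condition 4 from the $b$-independence of $p_{\lambda,i}\cdot b$ is correct. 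The one place the sketch is materially thinner than a proof is the nine-term telescoping: as written the middle coordinate first collapses to $-p_{\lambda o,oj}+ab-p_{o\mu,io}$ only after one establishes auxiliary identities such as $p_{\lambda o,io}=0$, and extracting conditions 2 and 3 requires comparing \emph{two} distributive expansions rather than reading them off a single product; but these are computations within your framework, not missing ideas.
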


\noindent The semiring constructed in the above theorem is denoted by
 $\mathscr{M} (I, R, \Lambda; P)$ and is called a \emph{Rees matrix semiring}.

\begin{Corollary}
~\cite{smw} \label{cor} Let $\mathscr{M}(I, R, \Lambda ; P)$ be a
Rees matrix semiring. Then $p_{\lambda \mu,i j} = p_{\lambda o,o
j} + p_{o \mu,i o}$ holds for all $i,j \in I$; $\lambda, \mu \in
\Lambda$. This yields $p_{\lambda,i} = p_{\lambda o, o i} + p_{o
\lambda,i o}$ and hence by assumption (5) and (6) stated in
Theorem \ref{rmr}, $ab + p_{\lambda, i} = p_{\lambda, i} + ab$
for all $i \in I$; $\lambda \in \Lambda$ and $a, b \in R$.
\end{Corollary}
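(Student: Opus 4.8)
The plan is to derive all three assertions from assumptions (1), (2) and (3) of Theorem~\ref{rmr} (only the last of these also invokes (5) and (6)), the sole algebraic input being that in a group, written additively and not assumed commutative, the equation $D = X - Y + Z$ may be solved for the middle term as $X = D - Z + Y$. The first displayed identity is the heart of the matter; the other two are quick consequences.

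I would begin by collecting a few normalizations. Each of (2) and (3) expresses an entry of $P$ as a $\pm$-alternating sum of three others that differ from it only in the left (respectively right) pair of coordinates; substituting $o$ for a suitable choice of the varying indices forces some of those three entries to have $o$ as their $I$- or $\Lambda$-coordinate, whereupon assumption (1) makes them vanish and the relation collapses to a two-term identity. In this way (3) gives $p_{\lambda\mu,\,oj} = p_{\lambda o,\,oj}$ and $p_{o\mu,\,ij} = p_{o\mu,\,io}$, and, taking $\lambda = o$ in the first of these and using (1) once more, $p_{o\mu,\,oj} = 0$. Then, taking $\nu = o$ and $k = o$ in (2) and solving for $p_{\lambda\mu,\,ij}$, one obtains $p_{\lambda\mu,\,ij} = p_{\lambda\mu,\,oj} - p_{o\mu,\,oj} + p_{o\mu,\,ij}$, and the three normalizations just listed turn the right-hand side into $p_{\lambda o,\,oj} + p_{o\mu,\,io}$; this is the first assertion. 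Specializing to $\mu = \lambda$ and $j = i$, and using $\lambda\lambda = \lambda$ in $(\Lambda,\cdot)$ together with $ii = i$ in $(I,\cdot)$, gives the second assertion $p_{\lambda,\,i} = p_{\lambda o,\,oi} + p_{o\lambda,\,io}$. For the third, (6) with $j = i$ says that $ab$ commutes additively with $p_{\lambda o,\,oi}$ and (5) with $\mu = \lambda$ says that $ab$ commutes additively with $p_{o\lambda,\,io}$; hence $ab$ commutes with their sum, which by the second assertion equals $p_{\lambda,\,i}$, so $ab + p_{\lambda,\,i} = p_{\lambda,\,i} + ab$.

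Since the computations are short, the only thing that really needs care is the bookkeeping in the (possibly non-abelian) group $(R,+)$: one must keep the three summands of each difference relation in order, isolate a middle summand only through $D = X - Y + Z \iff X = D - Z + Y$, and move $ab$ past an entry of $P$ only where (5) or (6) explicitly permits it. Accordingly, the single non-mechanical step is choosing the substitutions in the four-index relations (2) and (3) so that precisely the intended coordinates become $o$; after that, everything is a repeated appeal to (1).
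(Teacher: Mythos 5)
The paper states this corollary without proof (it is quoted from \cite{smw}), so there is no in-text argument to compare against; judged on its own, your derivation is correct and complete. Each substitution you describe does what you claim: condition (3) with the fixed $I$-coordinate (resp.\ the varying $\Lambda$-coordinate) specialized to $o$ makes the two middle terms vanish by (1) and the idempotency of $o$ in the bands, giving $p_{\lambda\mu,\,oj}=p_{\lambda o,\,oj}$, $p_{o\mu,\,ij}=p_{o\mu,\,io}$ and $p_{o\mu,\,oj}=0$; then (2) with $\nu=k=o$ reads $p_{\lambda\mu,\,oj}=p_{\lambda\mu,\,ij}-p_{o\mu,\,ij}+p_{o\mu,\,oj}$, and isolating $p_{\lambda\mu,\,ij}$ in the (possibly non-abelian) group $(R,+)$ yields $p_{\lambda\mu,\,ij}=p_{\lambda\mu,\,oj}-p_{o\mu,\,oj}+p_{o\mu,\,ij}=p_{\lambda o,\,oj}+p_{o\mu,\,io}$, as required. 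The specialization $\mu=\lambda$, $j=i$ (using $\lambda\lambda=\lambda$, $ii=i$) and the final step---that $ab$ commutes with each summand by (6) and (5) and hence with their sum $p_{\lambda,i}$---are likewise correct; the only blemish is the harmless verbal slip of calling $X$ the ``middle term'' of $D=X-Y+Z$, though the solved form $X=D-Z+Y$ you actually use is the right one.
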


\begin{Definition}
Let $(S,+,\cdot)$ be an additively quasi regular semiring. We
consider the relations $\mathscr{L}^{*^{+}}$,
$\mathscr{R}^{*^{+}}$, $\mathscr{J}^{*^{+}}$,
$\mathscr{H}^{*^{+}}$ and $\mathscr{D}^{*^{+}}$ on $S$ defined
by: for $a, b \in S$;
\begin{center}
$a \mathscr{L}^{*^{+}} b$ if and only if $ma \mathscr{L}^{+} nb$,

$a \mathscr{R}^{*^{+}} b$ if and only if $ma \mathscr{R}^{+} nb$,

$a \mathscr{J}^{*^{+}} b$ if and only if $ma \mathscr{J}^{+} nb$,

$\mathscr{H}^{*^{+}} = \mathscr{L}^{*^{+}} \cap
\mathscr{R}^{*^{+}}$ and $\mathscr{D}^{*^{+}} =
\mathscr{L}^{*^{+}} o \, \, \mathscr{R}^{*^{+}}$
\end{center}

\noindent where $m$ and $n$ are the smallest positive integers such
that $ma$ and $nb$ are additively regular.

\end{Definition}

\begin{Definition}
\cite{mg}
 A quasi completely regular semiring $(S,+,\cdot)$ is called completely Archimedean
if  $\mathscr{J}^{*^{+}} = S\times S$.
\end{Definition}

\begin{Definition}
\cite{mg} Let $R$ be subskew-ring of a semiring $S$. If for every
$a\in S$ there exists a positive integer $n$ such that $na \in
R$, then $S$ is said to be a quasi skew-ring.
\end{Definition}

\begin{Definition}
A congruence $\rho$ on a semiring $S$ is called a b-lattice
congruence (idempotent semiring congruence) if $S/\rho$ is a
b-lattice (respectively, an idempotent semiring). A semiring $S$
is called a b-lattice (idempotent semiring) $Y$ of semirings
$S_{\alpha}(\alpha \in Y)$ if $S$ admits a b-lattice congruence
(respectively, an idempotent semiring congruence)  $\rho$ on $S$
such that $Y = S/\rho$ and each $S_{\alpha}$ is a $\rho$-class.
\end{Definition}

\begin{Theorem}
~\cite{mg}
\label{qcrs}The following conditions on a semiring $(S,+,\cdot)$
are equivalent.

1.  $S$ is a quasi completely regular semiring.

2.  Every $\mathscr{H}^{*^{+}}$-class is a quasi skew-ring.

3.  $S$ is (disjoint) union of quasi skew-rings.

4.  $S$ is a b-lattice of completely Archimedean semirings.

5.  $S$ is an idempotent semiring of quasi skew-rings.
\end{Theorem}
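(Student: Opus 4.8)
The plan is to close the five conditions into a loop pivoting on $(1)$: first the ``easy'' implications $(3)\Rightarrow(1)$, $(4)\Rightarrow(1)$, $(5)\Rightarrow(1)$, then the structural chain $(1)\Rightarrow(2)\Rightarrow(3)$ together with $(2)\Rightarrow(5)$, and finally $(1)\Rightarrow(4)$. For the easy directions, note that in a skew-ring $R$ every element $r$ is completely regular, witnessed by $x=-r$: indeed $r+x+r=r$, $r+x=x+r=0$, and since $(b+c)a=ba+ca$ forces $r\cdot 0=0$, also $r(r+x)=0=r+x$. Hence in a quasi skew-ring every element has a positive additive multiple that is completely regular. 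Consequently, if $S$ is a (disjoint) union of quasi skew-rings or an idempotent semiring of quasi skew-rings, every element of $S$ lies in one such block and is quasi completely regular, giving $(3)\Rightarrow(1)$ and $(5)\Rightarrow(1)$; and $(4)\Rightarrow(1)$ holds because a completely Archimedean semiring is quasi completely regular by definition, every element of a b-lattice of such semirings lies in one b-lattice class, and the existential-positive formulas defining quasi complete regularity survive passage from a subsemiring to $S$.

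For the core implication $(1)\Rightarrow(2)$, assume $S$ is quasi completely regular and, for $a\in S$, let $n_a$ be the least positive integer with $n_a a$ additively regular. The key technical step is a stabilisation lemma: if $na$ is completely regular then $kna$ is completely regular for every $k\ge1$, all the $kna$ are $\mathscr{H}^{+}$-related, and they lie in a single maximal additive subgroup $G_a$ of $(S,+)$ whose identity $e_a$ is the additive idempotent attached to $a$. (For $k=2$, writing $e=na+x$, one checks $2na+x=na$ and $x+2na=na$, so $na\,\mathscr{H}^{+}\,2na$.) This lemma removes the ``smallest integer'' bookkeeping from the definition of $\mathscr{L}^{*^{+}},\mathscr{R}^{*^{+}},\mathscr{H}^{*^{+}}$, replacing it by ``any sufficiently divisible common multiple''. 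Combined with the fact that $\mathscr{L}^{+},\mathscr{R}^{+},\mathscr{H}^{+}$ are congruences on $(S,\cdot)$ and behave well under adding elements, it yields that $\mathscr{H}^{*^{+}}$ is a congruence on the semiring $S$ and that $S/\mathscr{H}^{*^{+}}$ is an idempotent semiring. Finally, for a fixed $\mathscr{H}^{*^{+}}$-class $H$: it is a subsemiring, and the set $R_H=\{\,n_a a : a\in H\,\}$ of completely regular members of $H$ lies, by the stabilisation lemma, inside a single $\mathscr{H}^{+}$-class of $(S,+)$, hence is an additive group; checking that $R_H$ is also closed under multiplication makes it a subskew-ring of $H$ in which, by construction, every element of $H$ has a positive additive multiple. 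Thus $H$ is a quasi skew-ring, which is $(2)$.

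From $(2)$, the $\mathscr{H}^{*^{+}}$-classes partition $S$ into quasi skew-rings, which is $(3)$; and since $\mathscr{H}^{*^{+}}$ is an idempotent semiring congruence with $S/\mathscr{H}^{*^{+}}$ an idempotent semiring, $(5)$ follows as well. For $(1)\Rightarrow(4)$ I would show $\mathscr{D}^{*^{+}}=\mathscr{L}^{*^{+}}\circ\mathscr{R}^{*^{+}}=\mathscr{J}^{*^{+}}$ and that $\mathscr{J}^{*^{+}}$ is a b-lattice congruence: $S/\mathscr{J}^{*^{+}}$ is a band under $\cdot$ and a semilattice under $+$ — additive commutativity modulo $\mathscr{J}^{*^{+}}$ coming from the fact that $a+b$ and $b+a$ have additive multiples generating the same additive principal ideal — while each class $T$ is quasi completely regular with $\mathscr{J}^{*^{+}}\cap(T\times T)=T\times T$, i.e.\ $T$ is completely Archimedean; together with the easy $(4)\Rightarrow(1)$ this closes the loop.

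The main obstacle is the core step, and within it two points. First, verifying that $\mathscr{H}^{*^{+}}$ really is a congruence despite its element-dependent definition rests entirely on the stabilisation lemma for multiples of completely regular elements. Second, showing $R_H$ is closed under multiplication — equivalently, that the product of two completely regular elements of a common $\mathscr{H}^{*^{+}}$-class is again completely regular — is the genuinely subtle point; the additive group structure of $R_H$ is essentially automatic once the congruence property is in hand, but the multiplicative closure forces one to exploit the coherent behaviour of additive idempotents under products. In practice this means first establishing the b-lattice decomposition $(4)$ and then the Rees-matrix structure (Theorem \ref{rmr}) of the completely simple core of each block, and only afterwards reading off $(2)$ and $(5)$.
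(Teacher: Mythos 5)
First, a point of comparison: the paper does not prove this statement at all — Theorem \ref{qcrs} is imported verbatim from \cite{mg} — so there is no in-paper argument to measure yours against, and what follows assesses your sketch on its own terms. Your overall architecture (the easy implications $(3),(4),(5)\Rightarrow(1)$ via the observation that every element $r$ of a skew-ring is completely regular with witness $x=-r$ and $r\cdot 0=0$ forced by distributivity, then the structural chain through $\mathscr{H}^{*^{+}}$ and $\mathscr{J}^{*^{+}}$) is the right one and matches how \cite{mg} and the underlying theory of GV-semigroups proceed; the stabilisation lemma for the multiples $kna$ is also correct as stated.

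The genuine gap is the step you flag and then wave past: the claim that $\mathscr{H}^{*^{+}}$ is a congruence because $\mathscr{L}^{+},\mathscr{R}^{+},\mathscr{H}^{+}$ are congruences on $(S,\cdot)$ and ``behave well under adding elements.'' At the level of the additive reduct alone this is false: in a general quasi completely regular (GV-) semigroup $\mathscr{H}^{*}$ need not be a congruence — already a completely regular semigroup is a band of groups if and only if $\mathscr{H}$ is a congruence, and most are not. So $(1)\Rightarrow(5)$ is a genuinely semiring-theoretic fact whose proof must invoke the distributive laws together with the third axiom $a(a+x)=a+x$ of complete regularity to force additive compatibility of $\mathscr{H}^{+}$, and hence of $\mathscr{H}^{*^{+}}$; this is exactly the mechanism behind Theorem 3.6 of \cite{sms} in the completely regular case, and your sketch never uses distributivity for this purpose. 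The same issue recurs in $(1)\Rightarrow(4)$: Putcha--Bogdanovi\'{c} theory gives that $(S,+)$ is a semilattice of completely Archimedean semigroups, but you still must show that the associated congruence $\mathscr{J}^{*^{+}}$ is multiplicatively compatible with band quotient, which is again asserted rather than proved. Secondarily, the multiplicative closure of the regular part $R_H$ of an $\mathscr{H}^{*^{+}}$-class is left as an acknowledged ``subtle point,'' and your closing remark that one should establish $(4)$ and the Rees-matrix structure first and only then read off $(2)$ and $(5)$ quietly reverses the order of your own argument, which derived $(4)$ after $(2)$. As it stands the proposal is a sound roadmap with the two load-bearing steps missing.
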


\begin{Definition}
\cite{skmg} Let $(S, +, \cdot)$ be a semiring. A nonempty subset
$I$ of $S$ is said to be a bi-ideal of $S$ if $a\in I$ and $x\in
S$ imply that $a+x,\,\, x+a,\,\, ax,\,\, xa\, \in I$.
\end{Definition}

\begin{Definition}
\cite{skmg} Let $I$ be a bi-ideal of a semiring $S$. We define a
relation $\rho_{_I}$ on $S$ by $a \rho_{_I} b$ if and only if
either $a, b \in I$ or $a = b$ where $a, b \in S$. It is easy to
verify that $\rho_{_I}$ is a congruence on $S$. This congruence
is said to be Rees congruence on $S$ and the quotient semiring
$S/\rho_{_I}$ contains a zero, namely $I$. This quotient semiring
$S/\rho_{_I}$ is said to be the Rees quotient semiring and is
denoted by $S/I$. In this case the semiring $S$ is said to be an
ideal extension or simply an extension of  $I$ by the semiring
$S/I$. An ideal extension $S$ of  a semiring $I$ is said to be a
nil-extension of $I$ if for any $a \in S$ there exists a positive
integer $n$ such that $na \in I$.
\end{Definition}

\begin{Theorem}
~\cite{skmg} A semiring $S$ is a quasi skew-ring if and only if
$S$ is a nil-extension of a skew-ring.
\end{Theorem}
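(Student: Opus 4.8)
The backward direction is immediate: if $S$ is a nil-extension of a skew-ring $R$, then $R$ is a bi-ideal of $S$, hence a subsemiring, and since $(R,+)$ is a group, $R$ is a subskew-ring of $S$; the defining property of a nil-extension, that for each $a\in S$ some $na$ lies in $R$, is exactly the condition making $S$ a quasi skew-ring. All the content is therefore in the forward direction, and my plan is to exhibit the skew-ring explicitly: I claim $S$ is a nil-extension of $Reg^+S$.

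So suppose $S$ is a quasi skew-ring with witnessing subskew-ring $R$, and let $0$ denote the additive identity of the group $(R,+)$. First I would record two elementary facts. (i) $E^+(S)=\{0\}$: if $e+e=e$ then $ne\in R$ for some $n$, but $ne=e$, so $e$ is an additive idempotent of the group $(R,+)$ and hence $e=0$. (ii) $0$ is a multiplicative zero of $S$: $0s=(0+0)s=0s+0s$ is an additive idempotent, so $0s=0$ by (i), and dually $s0=0$. Also, using (i), every $u\in Reg^+S$ has $0$ as a two-sided additive identity and a two-sided additive inverse lying again in $Reg^+S$: choosing any $x\in V^+(u)$, the elements $u+x$ and $x+u$ lie in $E^+(S)=\{0\}$, so $u=u+x+u$ gives $0+u=u=u+0$, while $x=x+u+x$ shows $x\in Reg^+S$.

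The crux — and the step I expect to be the main obstacle — is showing that $0$ is additively central, i.e. $0+s=s+0$ for every $s\in S$. For $s\in R$ this is clear; otherwise I would fix $n\ge 2$ with $ns\in R$ and let $g\in R$ be the additive inverse of $ns$, so $ns+g=g+ns=0$ and, since $ns\in R$, also $0+ns=ns+0=ns$. Then
\[
(0+s)+(n-1)s \;=\; 0+ns \;=\; ns \;=\; s+(n-1)s ,
\]
and, symmetrically, $(n-1)s+(s+0)=ns+0=ns=(n-1)s+s$. Adding $s$ and then $g$ on the right of the first identity gives $0+s+0=s+0$; adding $s$ and then $g$ on the left of the second gives $0+s+0=0+s$; comparing these yields $0+s=s+0$. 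The only genuine work is the bookkeeping in this ``pad $s$ with a multiple of itself lying in $R$, then cancel by its inverse'' manoeuvre; everything else is formal.

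Granting that $0$ is central, the rest is routine. Then $Reg^+S=\{w\in S:0+w=w\}$, and this set is closed under $+$, contains $0$, and has additive inverses (for $w$ in it pick $n$ with $nw\in R$; the inverse of $nw$ inside $R\subseteq Reg^+S$ supplies left and right additive inverses of $w$, which now coincide because $0$ is a genuine two-sided identity), so $(Reg^+S,+)$ is a group; and it is closed under $\cdot$ since $u=u+x+u$ forces $uv=uv+xv+uv$. Hence $Reg^+S$ is a subskew-ring of $S$. Centrality of $0$ together with $0$ being a multiplicative zero then makes $Reg^+S$ a bi-ideal: for $u\in Reg^+S$ and $s\in S$ the products $us=us+xs+us$ and $su=su+sx+su$ are additively regular, while $0+(u+s)=(0+u)+s=u+s$ and, using centrality, $0+(s+u)=s+u$, so $u+s,\,s+u\in Reg^+S$. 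Finally, for every $s\in S$ there is $n$ with $ns\in R\subseteq Reg^+S$, so $S$ is a nil-extension of the skew-ring $Reg^+S$, as required.
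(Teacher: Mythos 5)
The paper does not prove this statement: it is quoted verbatim from the reference [skmg] (Maity and Ghosh, \emph{Nil-extension of a Completely Simple Semiring}), so there is no in-paper argument to compare yours against. Judged on its own, your proof is correct and complete. You correctly identify the one nontrivial point, the additive centrality of the group identity $0$ of the witnessing subskew-ring $R$, and your ``pad with $(n-1)s$, then cancel by the inverse $g$ of $ns$'' computation is valid: both chains $0+s+0=s+0$ and $0+s+0=0+s$ check out, and the reduction to $n\ge 2$ is harmless since $ns\in R$ implies $2ns\in R$. The remaining steps (that $E^+(S)=\{0\}$, that $Reg^+S=\{w:0+w=w\}$ is a subskew-ring, and that it is a bi-ideal containing $R$, hence the kernel of a nil-extension) all follow as you say; the only spot worth writing out more fully is the inclusion $\{w:0+w=w\}\subseteq Reg^+S$, which your inverse construction $x=(n-1)w+(-(nw))$ already supplies. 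This is essentially the expected route for such a theorem -- exhibiting $Reg^+S$ as the kernel -- and I see no gap.
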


\begin{Theorem}
~\cite{skmg}
\label{cas}
The following conditions on a semiring are equivalent:

1.  $S$ is a completely Archimedean semiring.

2.  $S$ is a nil-extension of a  completely simple semiring.

3.  $S$ is  Archimedean and quasi completely regular.
\end{Theorem}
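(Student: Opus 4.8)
The plan is to run the cycle $(1)\Rightarrow(2)\Rightarrow(3)\Rightarrow(1)$, of which $(1)\Rightarrow(2)$ is by far the hardest. For $(2)\Rightarrow(3)$: if $S$ is a nil-extension of a completely simple semiring $K$, then each $a\in S$ has a positive multiple $pa\in K$, which is completely regular since $K$ is, so $S$ is quasi completely regular; and for $a,b\in S$, taking $p,q$ with $pa,qb\in K$ and using $\mathscr{J}^{+}=K\times K$ on $K$ gives $pa\in K^{1}+qb+K^{1}\subseteq S^{1}+b+S^{1}$, so $S$ is Archimedean. For $(3)\Rightarrow(1)$: by Theorem~\ref{qcrs} the quasi completely regular semiring $S$ is a b-lattice $Y$ of completely Archimedean semirings, and since $(Y,+)$ is a semilattice we have $n\bar{a}=\bar{a}$ in $Y$ for every $n$; hence the Archimedean condition on $S$ forces $\bar{a}\in Y^{1}+\bar{b}+Y^{1}$, so $\bar{a}\leq\bar{b}$ and, by symmetry, $\bar{b}\leq\bar{a}$, for all $a,b\in S$. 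Thus $|Y|=1$ and $S$ is itself completely Archimedean.

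For $(1)\Rightarrow(2)$, assume $S$ is completely Archimedean and set $K=Reg^{+}S$. Two facts hold already without the Archimedean hypothesis. First, $K$ is precisely the set of completely regular elements of $S$: an additively regular $a$ lies in an additive subgroup with identity $a^{0}$ and inverse $a'$, and $aa^{0}$ is an additive idempotent satisfying $aa^{0}=n(aa^{0})=(na)a^{0}=a^{0}$ for any completely regular multiple $na$ of $a$, so $a(a+a')=a+a'$; moreover $a^{0},a'\in K$, so $K$ is itself completely regular as a subsemiring. Second, $K$ absorbs products from either side: for $a\in K$, $x\in S$, distributivity gives $ax+a'x=a^{0}x=a'x+ax$ and $ax+a^{0}x=ax=a^{0}x+ax$, so $a^{0}x$ is an additive idempotent acting as a two-sided additive identity for $ax$, with additive inverse $a'x$; hence $ax\in K$, and symmetrically $xa\in K$. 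Granting that $K$ is also closed under addition by arbitrary elements of $S$, so that $K$ is a bi-ideal, it follows that $S$ is a nil-extension of $K$, and $K$ is a completely simple semiring: being completely regular, by the definition it only remains to check $\mathscr{J}^{+}=K\times K$ on $K$, and for $c,d\in K$ all positive multiples of $c$ lie in the additive subgroup of $c$, hence are $\mathscr{H}^{+}$-related to $c$; combined with the Archimedean inclusions $mc\in S^{1}+d+S^{1}$ and $kd\in S^{1}+c+S^{1}$ --- after absorbing the subgroup identities $c^{0},d^{0}\in K$ into the mediating terms --- this yields $c\,\mathscr{J}^{+}\,d$ inside $K$.

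The main obstacle is therefore the additive closure of $K=Reg^{+}S$, i.e.\ that $a\in K$, $x\in S$ imply $a+x\in K$; this is the one point where the Archimedean hypothesis is genuinely required. The route I would take is to identify $K$ with the minimal additive ideal $M$ of $S$. By the Archimedean condition any two additive idempotents $e,f$ satisfy $e=me\in S^{1}+f+S^{1}$ and $f=nf\in S^{1}+e+S^{1}$, so all additive idempotents lie in a single $\mathscr{J}^{+}$-class, which is moreover the least one (again by the Archimedean condition, $e\in S^{1}+s+S^{1}$ for every $s$); this bottom $\mathscr{J}^{+}$-class is the minimal additive ideal $M$. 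Since $M$ inherits quasi complete regularity and, being a minimal ideal, is additively simple, $M$ is a completely simple additive semigroup --- so its elements are additively regular, giving $M\subseteq Reg^{+}S$ --- while $Reg^{+}S\subseteq M$ because additively regular elements are $\mathscr{H}^{+}$-related to idempotents. Hence $K=Reg^{+}S=M$ is an additive ideal, in particular closed under addition by $S$, so $K$ is a bi-ideal. I expect the genuine effort to lie exactly here: justifying that $M$ is additively completely simple (invoking quasi complete regularity to exclude bicyclic-type behaviour) and that it is a completely simple \emph{semiring} and not merely a completely simple additive semigroup; once that is settled, the remaining links of the cycle are routine.
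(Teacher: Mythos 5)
This theorem is quoted by the paper from \cite{skmg} without proof, so there is no in-paper argument to compare against; judging your proposal on its own terms, the architecture is the standard one (and, as far as I know, the one used in the cited source): handle $(2)\Rightarrow(3)$ and $(3)\Rightarrow(1)$ by soft arguments, and for $(1)\Rightarrow(2)$ identify the kernel with $K=Reg^{+}S$, show it is a completely simple semiring and a bi-ideal, and conclude that $S$ is a nil-extension of it. Your two easy implications are correct: the Archimedean property of a nil-extension of a completely simple semiring via $\mathscr{J}^{+}=K\times K$, and the collapse of the b-lattice $Y$ from Theorem~\ref{qcrs} to a point under the Archimedean hypothesis, are both sound. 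The multiplicative absorption $ax,xa\in K$ via distributivity, the computation $aa^{0}=n(aa^{0})=(na)a^{0}=a^{0}$ showing that additively regular elements are completely regular as semiring elements, and the $\mathscr{J}^{+}$-argument on $K$ are all fine \emph{once} one knows that every element of $K$ lies in an additive subgroup and that $K$ is an additive ideal.

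That is exactly where the genuine gap sits, and you have flagged it yourself rather than closed it. Two specific points. First, your opening claim that ``$K$ is precisely the set of completely regular elements'' is asserted to hold ``without the Archimedean hypothesis,'' but your argument for it presupposes that an additively regular element lies in an additive subgroup with identity $a^{0}$; for a general completely $\pi$-regular semigroup this is false (regular elements of $T_{3}$ need not be group elements), so this fact must be \emph{derived} from the Archimedean hypothesis (or from the structure theorem of \cite{mg}), not assumed up front. Second, the heart of $(1)\Rightarrow(2)$ --- that the minimal additive ideal $M$ exists, equals $Reg^{+}S$, and is an additively completely simple subsemigroup --- is precisely the classical Munn--Bogdanovi\'{c} theorem (an Archimedean, completely $\pi$-regular semigroup is a nil-extension of a completely simple semigroup) applied to $(S,+)$; the cited source simply invokes this semigroup result for the additive reduct, whereas you set out to re-derive it and stop at ``I expect the genuine effort to lie exactly here.'' Your outline of that re-derivation (the principal ideal of an idempotent is the least ideal; simple plus group-bound implies completely simple) is correct and completable, but the key lemma excluding bicyclic-type behaviour is stated, not proved. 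As it stands the proposal is a correct and well-aimed sketch with the decisive step deferred rather than established.
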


\section{Main Results}

In this section we characterize b-lattice of nil-extensions of
rectangular skew-rings.

\begin{Definition}
An idempotent semiring $(S, +, \cdot)$  is said to be a
rectangular band semiring if $(S, +)$ is a rectangular band.
\end{Definition}

\begin{Definition}
A semiring $(S, +, \cdot)$  is said to be a rectangular skew-ring
if it is a completely simple semiring and $(S, +)$ is an orthodox
semigroup, i.e., $E^+(S)$ is a subsemigroup of the semigroup $(S,
+)$ .
\end{Definition}

\begin{Theorem}
A semiring is a rectangular skew-ring if and only if it is
isomorphic to direct product of a rectangular band semiring and a
skew-ring.
\end{Theorem}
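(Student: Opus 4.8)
The plan is to prove the two directions separately, using the Rees matrix representation of completely simple semirings (Theorem~\ref{rmr}) as the central tool. For the easy direction, suppose $S \cong B \times R$ where $(B,+,\cdot)$ is a rectangular band semiring and $(R,+,\cdot)$ is a skew-ring. Since $(R,+)$ is a group it is completely regular, and $(B,\cdot)$, $(R,\cdot)$ are bands so $(B\times R,\cdot)$ has the relevant idempotent structure; a direct product of completely regular semirings is completely regular, and $\mathscr{J}^+$ on $B\times R$ is all of $(B\times R)^2$ because $(B,+)$ is a rectangular band (hence a single $\mathscr{J}$-class) and $(R,+)$ is a group (a single $\mathscr{J}$-class), so $\mathscr{J}^+ = S\times S$ and $S$ is completely simple. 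Finally $E^+(B\times R) = B \times \{0_R\}$, which is closed under $+$, so $(S,+)$ is orthodox and $S$ is a rectangular skew-ring.

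For the converse, let $S$ be a rectangular skew-ring. By Theorem~\ref{rmr} write $S \cong \mathscr{M}(I,R,\Lambda;P)$. The first key step is to exploit orthodoxy of $(S,+)$: the additive idempotents of $\mathscr{M}(I,R,\Lambda;P)$ are exactly the elements $(i, -p_{\lambda,i}\text{-type correction}, \lambda)$ — more precisely $(i,a,\lambda)$ is an additive idempotent iff $2a + p_{\lambda,i} = a$, i.e. $a = -p_{\lambda,i}$ after using that $(R,+)$ is a group; since $(S,+)$ is a completely simple semigroup, by Howie the additive semigroup is a Rees matrix semigroup over the group $(R,+)$ with sandwich matrix $(p_{\lambda,i})$, and such a semigroup is orthodox (a rectangular group) if and only if $p_{\lambda,i} + p_{\mu,j} - p_{\mu,i} = p_{\lambda,j}$ for all $i,j,\lambda,\mu$ — equivalently, normalizing, $p_{\lambda,i} = 0$ for all $i \in I$, $\lambda\in\Lambda$. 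Combined with Corollary~\ref{cor}, which already gives $p_{\lambda,i} = p_{\lambda o, o i} + p_{o\lambda, i o}$ and the commuting relation $ab + p_{\lambda,i} = p_{\lambda,i} + ab$, the upshot is that after an appropriate normalization of the Rees coordinates we may assume all sandwich entries $p_{\lambda\mu,ij}$ vanish.

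Once the sandwich matrix is trivial, the operations collapse to $(i,a,\lambda) + (j,b,\mu) = (i, a+b, \mu)$ and $(i,a,\lambda)\cdot(j,b,\mu) = (ij, ab, \lambda\mu)$, so $S \cong (I\times\Lambda) \times R$ where $I\times\Lambda$ carries the coordinatewise band multiplication and the additive rectangular-band structure $(i,\lambda)+(j,\mu) = (i,\mu)$. Setting $B = I\times\Lambda$ with these operations, one checks $B$ is an idempotent semiring with $(B,+)$ a rectangular band — i.e. a rectangular band semiring — and $R$ is the given skew-ring, giving the desired isomorphism. The main obstacle I anticipate is the normalization step: showing that orthodoxy of $(S,+)$ forces (up to isomorphism of Rees matrix semirings, i.e. up to a diagonal change of coordinates $a \mapsto a + q$ absorbing row/column factors) that $P$ can be taken to be the zero matrix, while simultaneously verifying this coordinate change is compatible with the \emph{multiplicative} structure and with the compatibility axioms (1)--(6) of Theorem~\ref{rmr}. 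The commuting conditions (4)--(6) and Corollary~\ref{cor} are exactly what make the additive renormalization also respect multiplication, so the proof hinges on assembling these pieces carefully rather than on any single hard computation.
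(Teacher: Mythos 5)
Your proposal is correct and follows essentially the same route as the paper: represent $S$ as a Rees matrix semiring $\mathscr{M}(I,R,\Lambda;P)$, translate orthodoxy of $(S,+)$ into the identity $p_{\lambda,i}-p_{\mu,i}+p_{\mu,j}=p_{\lambda,j}$ on the sandwich matrix, and read off the direct product decomposition. The ``main obstacle'' you flag --- renormalizing $P$ to zero compatibly with the multiplicative structure --- is in fact vacuous: the matrix in Theorem~\ref{rmr} is already normalized by condition (1) ($p_{\lambda,o}=p_{o,i}=0$), so putting $j=o$ in the orthodoxy identity gives $p_{\lambda,i}=p_{\mu,i}$ and then $\mu=o$ gives $p_{\lambda,i}=p_{o,i}=0$ outright, whence every entry $p_{\lambda\mu,ij}$ vanishes and no coordinate change (hence no compatibility check) is needed. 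The paper stops at $p_{\lambda,i}=p_{\mu,i}$ and instead writes down the explicit isomorphism $(i,a,\lambda)\mapsto\big((i,-p_{\lambda,i},\lambda),\,p_{\lambda,i}+a\big)$ onto $E^+(S)\times R$, but since $E^+(S)$ is exactly your rectangular band semiring $I\times\Lambda$, the two decompositions coincide.
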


\begin{proof}
Let $(S, +, \cdot)$ be a rectangular skew-ring. Then by
definition $(S, +, \cdot)$ is a completely simple semiring and
$E^+(S)$ is a subsemigroup of the semigroup $(S, +)$. Let $S =
\mathscr{M} (I, R, \Lambda; P)$. As $E^+(S)$ is a subsemigroup of
the semigroup $(S, +)$ then we have $(i, -p_{\lambda,i}, \lambda)
+ (j, -p_{\mu,j}, \mu) = (i, -p_{\mu,i}, \mu)$. This implies $
 -p_{\lambda,i} + p_{\lambda,j} -p_{\mu,j}  =  -p_{\mu,i}$, i.e., $p_{\lambda,i} -p_{\mu,i} + p_{\mu,j}  =
 p_{\lambda,j}$. Putting $j = o$, we get $ p_{\lambda,i} =
 p_{\mu,i}$.  Similarly, putting $i=o$ we get $ p_{\lambda,j} =
 p_{\mu,j}$.

\noindent Obviously, $E^+(S)$ is a rectangular band semiring.

\noindent Now we define a mapping $\phi: S \rightarrow E^+(S)
\times R$ by
\begin{center}
$\phi(i, a, \lambda) = \Big((i, -p_{\lambda,i}, \lambda),
p_{\lambda,i} + a \Big) $ for all $(i, a, \lambda)\in S$.
\end{center}

\noindent Now,
\begin{center}
\begin{tabular}{ccl}
$\phi(i, a, \lambda) + \phi(j,b,\mu)$ & = & $\Big((i,
-p_{\lambda,i}, \lambda),  p_{\lambda,i} + a \Big) + \Big((j,
-p_{\mu,j}, \mu),  p_{\mu,j} + b \Big) $ \\

$ $ & = & $\Big((i, -p_{\lambda,i}, \lambda) + (j, -p_{\mu,j},
\mu),  p_{\lambda,i} + a + p_{\mu,j} + b \Big) $\\

$ $ & = & $ \Big((i, -p_{\mu,i}, \mu), p_{\mu,i}+ a +
p_{\lambda,j} + b \Big) $\\

$ $ & = & $ \phi(i,  a + p_{\lambda,j} + b , \mu)$\\

$ $ & = & $\phi \Big((i, a, \lambda) + (j,b,\mu)\Big)$.
\end{tabular}
\end{center}

\noindent Again,
\begin{center}
\begin{tabular}{ccl}
$\phi(i, a, \lambda) \cdot \phi(j,b,\mu)$ & = & $\Big((i,
-p_{\lambda,i}, \lambda),  p_{\lambda,i} + a \Big) \cdot \Big((j,
-p_{\mu,j}, \mu),  p_{\mu,j} + b \Big) $\\

$ $ & = & $\Big((i, -p_{\lambda,i}, \lambda)(j, -p_{\mu,j}, \mu),
(p_{\lambda,i} + a) (p_{\mu,j} + b\Big) $\\

$ $ & = & $\Big((ij, -p_{\lambda \mu, ij}, \lambda \mu), ab
\Big)$.
\end{tabular}
\end{center}

\noindent Also
\begin{center}
\begin{tabular}{ccl}
$\phi \Big((i, a, \lambda) \cdot (j,b,\mu)\Big)$ & = & $\phi(ij,
-p_{\lambda \mu, ij}+ab, \lambda \mu)$\\

$ $ & = & $\Big((ij, -p_{\lambda \mu, ij}, \lambda \mu),
p_{\lambda \mu, ij} -p_{\lambda \mu, ij} + ab \Big)$\\

$ $ & = & $\Big((ij, -p_{\lambda \mu, ij}, \lambda \mu), ab
\Big)$.
\end{tabular}
\end{center}

So
$\phi(i, a, \lambda) \cdot \phi(j,b,\mu) = \phi \Big((i, a, \lambda) \cdot (j,b,\mu)\Big)$.

\noindent Hence $\phi$ is a homomorphism. It is easy to see that
$\phi$  is one-one and onto. Hence $\phi$ is an isomorphism from
a rectangular skew-ring $S$ onto a direct product of a
rectangular band semiring $E^+(S)$ and a skew-ring $R$.

Converse part is obvious. \end{proof}

\begin{Theorem}
\label{rskew} The following conditions on a semiring $S$ are
equivalent:

(i) $S$ is a b-lattice of nil-extensions of rectangular
skew-rings,

(ii) $S$ is a quasi completely regular semiring and for every
$e,f \in E^+(S)$, there exists $n \in \mathbb{N}$ such that
$n(e+f) = (n+1)(e+f)$,

(iii) $S$ is additively quasi regular, $b^2 \, \mathscr{H}^{*^{+}}
\, b$ for all $b \in S$ and $a=a+x+a$ implies $a=a+2x+2a$.
\end{Theorem}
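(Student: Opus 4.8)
The plan is to establish the cycle $(i)\Rightarrow(ii)\Rightarrow(iii)\Rightarrow(i)$, leaning heavily on Theorem~\ref{qcrs} and Theorem~\ref{cas}, which already tell us that quasi complete regularity is equivalent to being a b-lattice of completely Archimedean (equivalently, nil-extensions of completely simple) semirings. The content of the present theorem is that the extra condition on additive idempotents is exactly what forces each completely simple kernel to be \emph{orthodox}, i.e.\ a rectangular skew-ring.

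\emph{$(i)\Rightarrow(ii)$.} Assume $S=\bigcup_{\alpha\in Y}S_\alpha$ is a b-lattice $Y$ of semirings $S_\alpha$, each a nil-extension of a rectangular skew-ring $K_\alpha$. Since rectangular skew-rings are completely simple and nil-extensions of completely simple semirings are completely Archimedean (Theorem~\ref{cas}), each $S_\alpha$ is quasi completely regular, and a b-lattice of quasi completely regular semirings is quasi completely regular (Theorem~\ref{qcrs}). Now take $e,f\in E^+(S)$. Both lie in $S_\alpha$ for a single $\alpha$ (additive idempotents are mapped to additive idempotents of the b-lattice quotient, which has $2\beta=\beta$, so $e,f$ have the same $\rho$-class), hence $e+f\in S_\alpha$ as well; pick $n$ with $n(e+f)\in K_\alpha$. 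In the rectangular skew-ring $K_\alpha\cong B\times R$ with $B$ a rectangular band, $n(e+f)$ has additive component an idempotent of the group $R$, namely $0$, so $n(e+f)$ is itself an additive idempotent of $K_\alpha$; therefore $(n+1)(e+f)=n(e+f)+(e+f)$, and projecting to $B$ (a rectangular band, where $u+u+u=u$) and to $R$ (where the entry is $0$) gives $(n+1)(e+f)=n(e+f)$. I would record the needed band identity $x+y+x=x$ explicitly here.

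\emph{$(ii)\Rightarrow(iii)$.} Quasi complete regularity gives additive quasi regularity and, by Theorem~\ref{qcrs}(2), that every $\mathscr H^{*^+}$-class is a quasi skew-ring; since $b^2\,\mathscr H^{*^+}\,b$ amounts to $b$ and $b^2$ lying in the same quasi skew-ring (equivalently the same completely Archimedean component), and multiplication on the b-lattice quotient is idempotent, this holds automatically. The real work is the implication $a=a+x+a\Rightarrow a=a+2x+2a$: such an $a$ lies in some completely simple kernel $K_\alpha$; write $a$ as $(i,c,\lambda)$ and $x$ as $(j,d,\mu)$ in a Rees presentation $\mathscr M(I,R,\Lambda;P)$ of $K_\alpha$. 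The hypothesis $a=a+x+a$ forces, on indices, $i=ij\cdot\,$-type relations and on the $R$-coordinate an equation involving the sandwich entries. Condition $(ii)$ applied to the idempotents $e=(i,-p_{\lambda,i},\lambda)$ and $f=(j,-p_{\mu,j},\mu)$ yields $n(e+f)=(n+1)(e+f)$, which (computing $e+f$ by the Rees addition) collapses to $p_{\lambda,i}=p_{\mu,i}$ and $p_{\lambda,j}=p_{\mu,j}$ — precisely the orthodoxy relations from the preceding theorem. Feeding these back into the coordinate form of $a+x+a$ and of $a+2x+2a$ shows the two agree. I would organize this as: (a) reduce to the kernel, (b) extract the orthodoxy of $E^+(K_\alpha)$ from $(ii)$, (c) do the short Rees-coordinate computation.

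\emph{$(iii)\Rightarrow(i)$.} From additive quasi regularity plus $b^2\,\mathscr H^{*^+}\,b$ I would first recover quasi complete regularity: the condition $b^2\,\mathscr H^{*^+}\,b$ makes each $\mathscr H^{*^+}$-class closed under squaring, and combined with Theorem~\ref{qcrs} one gets that $S$ is a b-lattice of completely Archimedean semirings $S_\alpha$, each a nil-extension of a completely simple $K_\alpha$. It then remains to show each $K_\alpha$ is orthodox. Take $e,f\in E^+(K_\alpha)$ and set $a=e+f$; choose any additive inverse, noting $a=a+x+a$ holds with a suitable $x$ built from inverses of $e$ and $f$ inside the completely simple semiring (here I use that completely simple semirings are additively completely regular, so $a$ has an inverse $x$ with $a=a+x+a$). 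The hypothesis then gives $a=a+2x+2a$, and iterating, $a=a+nx+na$ for all $n$; in Rees coordinates this pins down the sandwich matrix of $K_\alpha$ to satisfy the relations $p_{\lambda,i}=p_{\mu,i}$, i.e.\ $K_\alpha\cong(\text{rectangular band})\times R$ by the previous theorem, hence $E^+(K_\alpha)$ is a subsemigroup and $K_\alpha$ is a rectangular skew-ring.

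The main obstacle I expect is the bookkeeping in $(ii)\Rightarrow(iii)$ and $(iii)\Rightarrow(i)$: translating the additive identities $n(e+f)=(n+1)(e+f)$ and $a=a+2x+2a$ into the sandwich-matrix relations $p_{\lambda,i}=p_{\mu,i}$ requires care about which indices are set to the identity $o$ of the bands $I,\Lambda$ and uses Corollary~\ref{cor} to split $p_{\lambda,i}$ as $p_{\lambda o,oi}+p_{o\lambda,io}$. Everything else is an assembly of Theorem~\ref{qcrs}, Theorem~\ref{cas}, and the structure theorem for rectangular skew-rings.
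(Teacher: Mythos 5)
Your cycle $(i)\Rightarrow(ii)\Rightarrow(iii)\Rightarrow(i)$ matches the paper's skeleton, but the paper delegates all of the hard additive-semigroup content to Bogdanovi\'c's Theorem X.2.1 (the characterization of semilattices of nil-extensions of rectangular groups), whereas you try to reprove it by direct Rees-matrix computations, and those computations have real gaps. In $(i)\Rightarrow(ii)$, the claim that $e,f\in E^+(S)$ lie in a single component $S_\alpha$ is false: distinct b-lattice classes each carry their own additive idempotents, and $2\beta=\beta$ in the quotient does not collapse them. What is true is that $e+f$ lies in a single $S_\gamma$; but then your assertion that the $R$-coordinate of $n(e+f)$ is $0$ is precisely the statement to be proved, not something you can read off, and even granting it, ``$n(e+f)$ is an additive idempotent'' only yields $2n(e+f)=n(e+f)$, which is strictly weaker than $n(e+f)=(n+1)(e+f)$ (in a group $ng=0$ does not give $(n+1)g=ng$ unless $g=0$). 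In $(ii)\Rightarrow(iii)$ you write $x$ in the Rees presentation of $K_\alpha$, but $a=a+x+a$ only places $a$ in a kernel; $x$ may lie in a different component entirely, so the coordinate computation is ill-founded.

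The most serious gap is in $(iii)\Rightarrow(i)$: you assert that additive quasi regularity together with $b^2\,\mathscr{H}^{*^{+}}\,b$ already yields quasi complete regularity via Theorem~\ref{qcrs}. It does not --- no condition of that theorem has this form, and the entire point of this direction is to upgrade ``additively regular'' to ``completely regular'', i.e.\ to produce $y$ with $a=a+y+a$, $a+y=y+a$ \emph{and} $a(a+y)=a+y$. The paper's argument for the last identity is the one genuinely semiring-theoretic step of the whole proof: from the Bogdanovi\'c structure of $(S,+)$ (obtained using the hypothesis $a=a+x+a\Rightarrow a=a+2x+2a$, which you instead try to use only for orthodoxy) one gets $y$ with $a=a+y+a$ and $a+y=y+a$; then, since $\mathscr{H}^{*^{+}}$ is a congruence on $(S,\cdot)$ and $a^2\,\mathscr{H}^{*^{+}}\,a$, one has $a(a+y)\,\mathscr{H}^{*^{+}}\,(a+y)$ with both sides additive idempotents, whence $a(a+y)=a+y$ by uniqueness of the additive idempotent in an $\mathscr{H}^{*^{+}}$-class. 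This step is absent from your proposal. Finally, ``iterating $a=a+2x+2a$ to get $a=a+nx+na$'' is not a legitimate iteration: substituting $x'=2x+a$ into the hypothesis produces $a=a+2x+a+2x+3a$, not the claimed pattern. The efficient repair is the paper's route: invoke Theorem X.2.1 for the additive reduct in all three implications and reserve the $\mathscr{H}^{*^{+}}$-congruence argument for the multiplicative identity $a(a+y)=a+y$.
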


\begin{proof}
(i) $\Longrightarrow$ (ii):\, Let $S$ be a b-lattice of
nil-extensions of rectangular skew-rings. Then $S$ is a b-lattice
of nil-extensions of orthodox completely simple semirings. So by
Theorem 2.12, it follows that $S$ is a b-lattice of completely
Archimedean semirings and hence by Theorem 2.8, we have $S$ is a
quasi completely regular semiring. Again, $(S, +)$ is a
semilattice of nil-extensions of orthodox completely simple
semigroups, i.e., $(S, +)$ is a semilattice of nil-extensions of
rectangular groups. Then by Theorem X.2.1 ~\cite{bs}, for every
$e,f \in E^+(S)$, there exists $n \in Z^{+}$ such that $n(e+f) =
(n+1)(e+f)$.

(ii) $\Longrightarrow$ (iii): \, Let $(S, +, \cdot)$ be a quasi
completely regular semiring and for every $e, f \in E^+(S)$, there
exists $n \in Z^{+}$ such that $n(e+f) = (n+1)(e+f)$. Hence $S$
is an additively quasi regular semiring and $b^2 \,
\mathscr{H}^{*^{+}} \, b$ for all $b \in S$. Let $a=a+x+a$. Then
$a$ is additively regular and hence by Theorem X.2.1 ~\cite{bs},
it follows that $a = a+2x+2a$.

(iii) $\Longrightarrow$ (i): \, Let $S$ is additively quasi
regular, $b^2 \, \mathscr{H}^{*^{+}} \, b$ for all $b \in S$ and
$a=a+x+a$ implies $a=a+2x+2a$. Then by Theorem X.2.1~\cite{bs},
$(S,+)$ is a semilattice of nil-extensions of rectangular groups.
This implies $(S,+)$ is a GV-semigroup. To complete the proof, it
suffices to show that every additively regular element of $S$ is
completely regular. For this let $a = a+x+a$. Then $a$ is regular
and hence $a$ is completely regular in the semigroup $(S, +)$. So
there exists an element $y \in S$ such that $a = a+y+a$ and $a+y =
y+a$. Now since $\mathscr{H}^{*^{+}}$ is a congruence on $(S,
\cdot)$ and $(a+y) \, \mathscr{H}^{*^{+}} \, a$, it follows that
$a(a+y) \, \mathscr{H}^{*^{+}} \, a^2 \, \mathscr{H}^{*^{+}} \, a
\, \mathscr{H}^{*^{+}} \, (a+y)$. Since each
$\mathscr{H}^{*^{+}}$-class contains a unique additive idempotent
and $a(a+y) \, \mathscr{H}^{*^{+}} \, (a+y)$, it follows that
$a(a+y) = a+y$. Hence $S$ is a quasi completely regular semiring.
Consequently, $S$ is a b-lattice of nil-extensions of rectangular
skew-rings.
\end{proof}

\begin{Theorem}
A semiring $S$ is nil-extension of a rectangular skew-ring if and
only if $S$ is a completely Archimedean semiring and $E^+(S)$ is
a subsemigroup of $(S,+)$.
\end{Theorem}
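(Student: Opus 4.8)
The plan is to reduce the whole statement to one elementary observation: in any nil-extension $S$ of a semiring $K$, the additive idempotents of $S$ already lie in $K$, so that $E^+(S) = E^+(K)$. Indeed, if $e \in E^+(S)$ then $e+e=e$ forces $ne=e$ for every $n \in \mathbb{N}$ (induction: $e+e+e = (e+e)+e = e+e = e$, etc.), and since $S$ is a nil-extension of $K$ there is some $n$ with $ne \in K$; hence $e = ne \in K$, so $e \in E^+(K)$. The reverse inclusion $E^+(K) \subseteq E^+(S)$ is immediate, since $(K,+)$ is a subsemigroup of $(S,+)$ with the same addition. Moreover $K$, being a bi-ideal, satisfies $K+K \subseteq K$, so the phrases ``$E^+(K)$ is a subsemigroup of $(K,+)$'' and ``$E^+(S)$ is a subsemigroup of $(S,+)$'' denote literally the same condition once one knows $E^+(S) = E^+(K)$. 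This identity, together with Theorem \ref{cas}, will do all the work.

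For the implication $(\Longrightarrow)$, suppose $S$ is a nil-extension of a rectangular skew-ring $K$. By Definition 3.2 a rectangular skew-ring is a completely simple semiring, so $S$ is a nil-extension of a completely simple semiring, and Theorem \ref{cas} yields that $S$ is completely Archimedean. Since $K$ is additively orthodox, $E^+(K)$ is closed under $+$; by the observation above $E^+(S) = E^+(K)$, hence $E^+(S)$ is a subsemigroup of $(S,+)$.

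For the converse $(\Longleftarrow)$, suppose $S$ is completely Archimedean with $E^+(S)$ a subsemigroup of $(S,+)$. By Theorem \ref{cas}, $S$ is a nil-extension of a completely simple semiring $K$. Applying the observation, $E^+(K) = E^+(S)$, which is closed under $+$; since $K+K \subseteq K$, this set is a subsemigroup of $(K,+)$, i.e.\ $(K,+)$ is orthodox. Thus $K$ is a completely simple semiring whose additive idempotents form a subsemigroup of $(K,+)$, so $K$ is a rectangular skew-ring by Definition 3.2, and $S$ is a nil-extension of it.

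There is no real obstacle here beyond bookkeeping: in the converse one must take $K$ to be precisely the completely simple semiring supplied by Theorem \ref{cas} (no uniqueness of the kernel is needed), and the only step that genuinely requires argument is the passage $e+e=e \implies ne=e$, which is just the definition of an additive idempotent combined with the nil-extension hypothesis. The whole proof is therefore short, with Theorem \ref{cas} carrying the structural content.
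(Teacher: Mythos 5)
Your proof is correct and follows essentially the same route as the paper's: both directions reduce to Theorem \ref{cas} together with the identity $E^+(S)=E^+(K)$. The only difference is that you spell out why $E^+(S)=E^+(K)$ (via $e=ne\in K$ for suitable $n$), a step the paper dismisses with ``clearly''; your justification is valid.
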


\begin{proof}
Let $(S, +, \cdot)$  be a nil-extension of a rectangular skew-ring
$(K, +, \cdot)$. Then  $(K, +, \cdot)$ is a completely simple
semiring  and $E^+(K)$ is a subsemigroup of $(K,+)$. Hence $S$ is
a completely Archimedean semiring. Clearly, $E^+(S) = E^+(K)$ and
thus $E^+(S)$ is a subsemigroup of $(S,+)$.

Conversely, let $(S, +, \cdot)$ be a completely Archimedean
semiring and $E^+(S)$ be a subsemigroup of $(S,+)$. Then $(S, +,
\cdot)$ is  a nil-extension of a completely simple semiring  $(K,
+, \cdot)$. Also, $E^+(S) = E^+(K)$ implies $(K, +, \cdot)$ is a
completely simple semiring such that $(K, +)$ is orthodox. Thus
$K$ is a rectangular skew-ring and hence $S$ is nil-extension of
a rectangular skew-ring $K$.
\end{proof}

\begin{Theorem}
The following conditions  are equivalent on a semiring $(S, +, \cdot)$ :

(i) $(S, +, \cdot)$ is a quasi completely regular semiring and
$E^+(S)$ is a subsemigroup of $(S,+)$.

(ii) $S$ is additively quasi regular, $b^2 \, \mathscr{H}^{*^{+}}
\, b$ for all $b \in S$, $a=a+x+a$ implies $a=a+2x+2a$ and
$Reg^+S$ is a subsemigroup of $(S,+)$.

(iii) $S$ is a b-lattice of nil-extensions of rectangular
skew-rings and $E^+(S)$ is a subsemigroup of $(S,+)$.
\end{Theorem}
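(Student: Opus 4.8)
I would prove the cycle (i) $\Longrightarrow$ (ii) $\Longrightarrow$ (iii) $\Longrightarrow$ (i), using Theorem \ref{rskew} as the backbone together with the dictionary between b-lattices of completely Archimedean semirings and quasi completely regular semirings (Theorems \ref{qcrs} and \ref{cas}). The implication (iii) $\Longrightarrow$ (i) is the easiest: if $S$ is a b-lattice of nil-extensions of rectangular skew-rings then it is a b-lattice of nil-extensions of completely simple semirings, hence a b-lattice of completely Archimedean semirings by Theorem \ref{cas}, hence quasi completely regular by Theorem \ref{qcrs}; since $E^+(S)$ is assumed to be a subsemigroup of $(S,+)$, (i) follows at once. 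For (i) $\Longrightarrow$ (ii): if $S$ is quasi completely regular and $E^+(S)$ is a subsemigroup, then for $e,f\in E^+(S)$ we have $e+f\in E^+(S)$, so $2(e+f)=e+f$, i.e. $n(e+f)=(n+1)(e+f)$ holds with $n=1$; thus condition (ii) of Theorem \ref{rskew} is satisfied and Theorem \ref{rskew} yields the three elementwise conditions appearing in (ii) here (additive quasi regularity, $b^{2}\,\mathscr{H}^{*^{+}}\,b$, and $a=a+x+a\Rightarrow a=a+2x+2a$).

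It then remains, for (i) $\Longrightarrow$ (ii), to show that $Reg^{+}S$ is a subsemigroup of $(S,+)$. Given $a,b\in Reg^{+}S$, choose $a'\in V^{+}(a)$, $b'\in V^{+}(b)$ and put $p=a'+a$, $q=b+b'$, $h=p+q$. Then $p,q\in E^{+}(S)$, so $h\in E^{+}(S)$, and $a+b=(a+a'+a)+(b+b'+b)=a+p+q+b=a+h+b$. Working inside the band $(E^{+}(S),+)$ and using $p+p=p$, $q+q=q$, $h+h=h$ and $h=p+q$, one gets $h+q+h+p+h=(p+q)+(p+q)+(p+q)=h$ and likewise $h+p+h+q+h=h$; hence $(a+h+b)+(b'+h+a')+(a+h+b)=a+h+b$ and $(b'+h+a')+(a+h+b)+(b'+h+a')=b'+h+a'$, so $b'+h+a'\in V^{+}(a+b)$ and $a+b\in Reg^{+}S$. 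This finishes (ii).

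For (ii) $\Longrightarrow$ (iii): from the three elementwise conditions, Theorem \ref{rskew} ((iii) $\Rightarrow$ (i)) shows $S$ is a b-lattice of nil-extensions of rectangular skew-rings, so $(S,+)$ is a semilattice of nil-extensions of rectangular groups and, as in that proof, every additively regular element of $S$ is additively completely regular. Since $Reg^{+}S$ is assumed to be a subsemigroup, it is then a completely regular subsemigroup of $(S,+)$ each of whose $\mathscr{D}^{+}$-classes is a rectangular group, i.e. a completely simple orthodox semigroup. The remaining point is to deduce that $E^{+}(S)=E(Reg^{+}S)$ is a subsemigroup of $(S,+)$; this is the semigroup-theoretic statement that a completely regular semigroup whose $\mathscr{D}$-classes are all rectangular groups is orthodox, which I would invoke from \cite{bs}. (Concretely: for $e\in E^{+}(K_{\alpha})$ and $f\in E^{+}(K_{\beta})$ the structure maps of the semilattice decomposition send the idempotents $e,f$ to additive idempotents of the rectangular group $K_{\alpha\beta}$ — idempotents of the translational hull of a rectangular group act with trivial group part — and the sum of those idempotents lies in the rectangular band $E^{+}(K_{\alpha\beta})$, hence is idempotent.) With $E^{+}(S)$ a subsemigroup and $S$ a b-lattice of nil-extensions of rectangular skew-rings, (iii) holds, and the cycle is complete.

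The one genuinely non-formal step is the last part of (ii) $\Longrightarrow$ (iii): passing from "$Reg^{+}S$ is a subsemigroup" to "$E^{+}(S)$ is a subsemigroup". Its reverse is just the short band computation used in (i) $\Longrightarrow$ (ii); the forward direction rests on the orthodoxy of a completely regular semigroup with rectangular-group $\mathscr{D}$-classes, for which I would lean on \cite{bs} (or on the structure theory of completely regular semigroups). Everything else is a faithful transcription of Theorem \ref{rskew} together with the b-lattice / quasi-complete-regularity correspondence of Theorems \ref{qcrs} and \ref{cas}.
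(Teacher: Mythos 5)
Your cycle (i)~$\Rightarrow$~(ii)~$\Rightarrow$~(iii)~$\Rightarrow$~(i) matches the paper's, and the implications (iii)~$\Rightarrow$~(i) and (i)~$\Rightarrow$~(ii) are essentially the paper's argument (the paper cites Proposition X.2.1 of \cite{bs} for closure of $Reg^+S$ where you give the explicit band computation with $p=a'+a$, $q=b+b'$, $h=p+q$; your version is a correct, self-contained substitute). The genuine divergence is in (ii)~$\Rightarrow$~(iii), at the step ``$Reg^+S$ a subsemigroup implies $E^+(S)$ a subsemigroup.'' You route this through structure theory: $(Reg^+S,+)$ is a completely regular semigroup whose $\mathscr{D}^+$-classes are rectangular groups, hence orthodox. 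That theorem (orthogroups coincide with semilattices of rectangular groups) is true, but your argument leaves two things to be checked: first, that $Reg^+S\cap S_\gamma$ equals the kernel $K_\gamma$ of each Archimedean component, so that $(Reg^+S,+)$ really is a semilattice of the rectangular groups $(K_\alpha,+)$ once it is closed under addition; second, the orthogroup theorem itself, which you outsource to \cite{bs} with only a heuristic about translational hulls. The paper avoids all of this with a four-line direct computation: for $e,f\in E^+(S)$ take $x\in V^+(e+f)$ (available since $e+f\in Reg^+S$), check that $f+x+e$ is an additive idempotent and that $(e+f)+(f+x+e)+(e+f)=e+f$, and then apply the hypothesis $a=a+x+a\Rightarrow a=a+2x+2a$ to conclude $e+f=2(e+f)$. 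The paper's computation is more elementary and makes visible exactly where the hypothesis $a=a+2x+2a$ is used; your route is conceptually illuminating (it explains \emph{why} the implication holds, via orthodoxy of semilattices of rectangular groups) but is heavier and, as written, not fully self-contained.
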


\begin{proof}
(i) $\Longrightarrow$ (ii):\, Let $S$ is a quasi completely
regular semiring and $E^+(S)$ is a subsemigroup of $(S,+)$. Then
clearly $S$ is a b-lattice of nil-extensions of rectangular
skew-rings. Hence by Theorem ~\ref{rskew}, it follows that $S$ is
additively quasi regular, $b^2 \, \mathscr{H}^{*^{+}} \, b$ for
all $b \in S$ and $a=a+x+a$ implies $a=a+2x+2a$. Also $E^+(S)$ is
a subsemigroup of $(S,+)$. Then by proposition X.2.1~\cite{bs},
for any $a,b,x,y \in S$, $a=a+x+a$ and $b=b+y+b$ implies
$a+b=a+b+y+x+a+b$, i.e., $a, b \in Reg^+S$ implies $a+b \in
Reg^+S$. Hence $Reg^+S$ is a subsemigroup of $(S,+)$.

(ii) $\Longrightarrow$ (iii):\, Let $S$ be additively quasi
regular, $b^2 \, \mathscr{H}^{*^{+}} \, b$ for all $b \in S$,
$a=a+x+a$ implies $a=a+2x+2a$ and $Reg^+S$ is a subsemigroup of
$(S,+)$. Then by Theorem ~\ref{rskew},  $S$ is a b-lattice of
nil-extensions of rectangular skew-rings. Let $e, f \in E^+(S)
\subset Reg^+(S)$. Then $e+f \in Reg^+(S)$ and let $x \in S$ be an
additive inverse of $e+f$. Then $f+x+e = f+(x+e+f+x)+e =
2(f+x+e)$. Now $(e+f)+(f+x+e)+(e+f) = e+f+x+e+f = e+f$ implies
$e+f = (e+f)+2(f+x+e)+2(e+f) = (e+f)+(f+x+e)+(e+f)+(e+f) =
(e+f)+(e+f) = 2(e+f)$. Hence we have $E^+(S)$ is a subsemigroup
of $(S,+)$.

(iii) $\Longrightarrow$ (i):\, It is obvious by Theorem
~\ref{rskew}. \end{proof}

\begin{Theorem}
The following conditions  are equivalent on a semiring $(S, +, \cdot)$ :

(i) $S$ is a quasi completely regular semiring and for every $e,f
\in E^+(S)$, $e+f = f+e$,

(ii) $S$ is b-lattice of quasi skew-rings and for every $e,f \in
E^+(S)$, $e+f = f+e$,

(iii) $S$ is additively quasi regular and $Reg^+S$ is a
subsemigroup of $(S,+)$ which is a b-lattice of skew-rings.
\end{Theorem}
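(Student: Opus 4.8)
===
**Proof proposal for the final Theorem.**

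The plan is to prove the chain $(i)\Rightarrow(ii)\Rightarrow(iii)\Rightarrow(i)$, in each case bootstrapping from the structure theory already established in Theorem~\ref{qcrs} and the previous theorems of this section. Throughout, the new hypothesis over the earlier results is that additive idempotents \emph{commute}, which is strictly stronger than ``$E^+(S)$ is a subsemigroup of $(S,+)$'' and should force the b-lattice blocks to be genuine skew-rings rather than merely rectangular skew-rings.

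\smallskip
\noindent\textbf{(i) $\Longrightarrow$ (ii).}\, If $S$ is quasi completely regular, then by Theorem~\ref{qcrs} it is a b-lattice of completely Archimedean semirings, hence (by Theorem~\ref{cas}) a b-lattice of nil-extensions of completely simple semirings; but a b-lattice of completely Archimedean semirings is in particular a b-lattice of quasi skew-rings, since every completely Archimedean semiring is a quasi skew-ring is false in general---so the correct route is: by Theorem~\ref{qcrs}(5), $S$ is an idempotent semiring of quasi skew-rings, and commutativity of $E^+(S)$ forces the idempotent-semiring quotient to be a b-lattice (its additive reduct becomes a semilattice), giving (ii). Concretely, one shows the b-lattice congruence is the one whose classes are the $\mathscr{H}^{*^+}$-saturated components, and since $e+f=f+e$ for additive idempotents the induced additive operation on the quotient is commutative idempotent.

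\smallskip
\noindent\textbf{(ii) $\Longrightarrow$ (iii).}\, From (ii), $S$ is a b-lattice of quasi skew-rings; each quasi skew-ring is (by Theorem~2.10) a nil-extension of a skew-ring. First I would check $S$ is additively quasi regular---immediate since quasi completely regular implies additively quasi regular. The substantive point is that $Reg^+S$ is a subsemigroup of $(S,+)$ and is itself a b-lattice of skew-rings. Here I would use Proposition~X.2.1 of \cite{bs} (already invoked in the proof of the previous theorem): $a=a+x+a$ and $b=b+y+b$ together with commuting idempotents give $a+b=a+b+y+x+a+b$, so $Reg^+S$ is closed under $+$; it is clearly closed under $\cdot$ since $\mathscr{H}^{*^+}$-type arguments (as in the proof of Theorem~\ref{rskew}) show products of additively regular elements are additively regular in this setting. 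Finally, restricting the b-lattice congruence to $Reg^+S$ exhibits $Reg^+S$ as a b-lattice of the skew-ring kernels of the blocks, because within each block the additively regular elements are exactly the kernel skew-ring.

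\smallskip
\noindent\textbf{(iii) $\Longrightarrow$ (i).}\, Assume $S$ is additively quasi regular and $Reg^+S$ is a subsemigroup that is a b-lattice of skew-rings. Since $Reg^+S$ is a b-lattice of skew-rings, every additive idempotent lies in $Reg^+S$ and two such idempotents $e,f$ lie in (possibly different) skew-ring blocks; projecting to the semilattice $Reg^+S/\rho$ shows $e+f$ and $f+e$ map to the same element, and within a single skew-ring block the only idempotent is the zero, so $e+f=f+e$. To get quasi complete regularity I would argue as in the proof of Theorem~\ref{rskew}: additive quasi regularity plus the b-lattice-of-skew-rings structure on $Reg^+S$ makes $(S,+)$ a semilattice of nil-extensions of rectangular groups (indeed of groups, since blocks are skew-rings, hence $(S,+)$ is a GV-semigroup); then for $a=a+x+a$ one produces $y$ with $a=a+y+a$, $a+y=y+a$, and the congruence property of $\mathscr{H}^{*^+}$ on $(S,\cdot)$ forces $a(a+y)=a+y$, so every additively regular element is completely regular and $S$ is quasi completely regular.

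\smallskip
\noindent The main obstacle I anticipate is \textbf{(ii) $\Rightarrow$ (iii)}, specifically verifying that $Reg^+S$ with the restricted operations is genuinely a b-lattice of skew-rings rather than of quasi skew-rings: one must check that intersecting a quasi skew-ring block with $Reg^+S$ lands exactly on the skew-ring kernel, and that the restricted multiplication does not leave $Reg^+S$. This requires knowing that in a quasi skew-ring which is a nil-extension of a skew-ring $R$, the additively regular elements are precisely $R$---a fact that should follow from Theorem~2.10 together with the observation that additive regularity is inherited correctly under nil-extensions, but it needs to be spelled out carefully.
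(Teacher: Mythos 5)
Your proposal is essentially correct, but it is organised differently from the paper and the hardest step is handled by a genuinely different mechanism. The paper does not prove a cycle: it shows (i) $\Leftrightarrow$ (ii) (by observing that $\mathscr{H}^{*^{+}}$ becomes a b-lattice congruence once additive idempotents commute, exactly the point you make via Theorem~\ref{qcrs}(5)), and then separately (i) $\Leftrightarrow$ (iii). For the direction into (iii), the paper first checks closure of $Reg^+S$ under $+$ by the same computation you cite, but then, instead of restricting the block decomposition of $S$ and identifying the regular part of each quasi skew-ring with its kernel, it notes that in a quasi completely regular semiring every additively regular element is completely regular, so $Reg^+S$ is a completely regular semiring; it then applies the structure theorem for completely regular semirings (Theorem 3.6 of \cite{sms}) to write $Reg^+S$ as a b-lattice of completely simple semirings $R_\alpha$, and uses $e+f=f+e$ together with $e\,\mathscr{D}^+\,f$ inside each $R_\alpha$ to force a unique additive idempotent per block, hence each $R_\alpha$ is a skew-ring. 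Your route instead rests on the lemma you flag yourself: in a nil-extension of a skew-ring $R$ the additively regular elements are exactly $R$. That lemma is true (any additive idempotent $e$ satisfies $ne=e\in R$, so $e=0_R$; then $a=a+x+a=0_R+a\in R$ since $R$ is a bi-ideal), so your approach closes, and it buys a more self-contained argument at the price of having to prove that the restricted congruence classes are precisely the kernels; the paper's approach outsources the heavy lifting to the completely regular structure theorem. One small repair is needed in your (iii) $\Rightarrow$ (i): from "$e+f$ and $f+e$ lie in the same skew-ring block whose only idempotent is zero'' you cannot conclude $e+f=f+e$ unless you first show both sums are idempotent, which is not immediate; the clean fix is the one the paper uses, namely that a semilattice of groups is a Clifford semigroup and idempotents in a Clifford semigroup are central.
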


\begin{proof}
(i) $\Longrightarrow$ (ii): \, Let $S$ be a quasi completely
regular semiring and for every $e, f \in E^+(S)$, $e+f = f+e$.
Then $\mathscr{H}^{*+}$ is a b-lattice congruence on $S$. Hence
$S$ is b-lattice of quasi skew-rings and for every $e, f \in
E^+(S)$, $e+f = f+e$.

(ii) $\Longrightarrow$ (i): \, This part is obvious.

(i) $\Longrightarrow$ (iii): \,  Let $S$ be a quasi completely
regular semiring such that for every $e, f \in E^+(S)$, $e+f =
f+e$. Then obviously $S$ is additively quasi regular. Now let $a,b
\in Reg^+S$. Then there exist $x, y \in S$ such that $a=a+x+a$
and $b=b+y+b$. So $a+x, x+a, b+y, y+b \in E^+(S)$. Now, $a+b =
a+x+a+b+y+b = a+b+y+x+a+b$ implies $a+b \in Reg^+S$. Hence $Reg^+
S$ is a subsemigroup of $(S,+)$. Since in a quasi completely
regular semiring, every additively regular element is completely
regular, it follows that $Reg^+S$ is a completely regular
semiring. Thus by Theorem 3.6 ~\cite{sms}, $Reg^+S$ can be
regarded as a b-lattice $Y$ of completely simple semirings
$R_{\alpha}$ ($\alpha \in Y$), where $Y = S/\mathscr{J}^+$ and
each $R_{\alpha}$ is a $\mathscr{J}^+$-class in $S$. Let $e$ and
$f$ be two additive idempotents in $R_{\alpha}$. Then $e+f = f+e$
and $e \, \mathscr{J}^+ \, f$. Since $R_{\alpha}$ is a completely
simple semiring then $(R_{\alpha}, +)$ is a completely simple
semigroup and so $e \, \mathscr{D}^+ \, f$ and thus we have
$e=f$. This shows that each $R_{\alpha}$ contains a single
additive idempotent, so that $(R_{\alpha}, +)$ is a group and
hence $(R_{\alpha}, +, \cdot)$ is a skew-ring. In other words, we
have shown that $Reg^+S$ is a b-lattice $Y$ of skew-rings
$R_{\alpha}$.

(iii) $\Longrightarrow$ (i): \, Let $S$ be an additively quasi
regular semiring and $Reg^+S$ be a subsemigroup of $(S,+)$ which
is a b-lattice of skew-rings. Then clearly $S$ is a quasi
completely regular semiring. Moreover, $(Reg^+(S), +)$ is a
Clifford semigroup. Let $e, f \in E^+(S)$. Then $e$ and $f$ are
two idempotents in the Clifford semigroup $(Reg^+(S), +)$. Hence
$e+f = f+e$.
\end{proof}

\begin{Definition}
An idempotent semiring $(S, +, \cdot)$  is said to be a left zero
semiring if $(S, +)$ is a left zero band.
\end{Definition}

\begin{Definition}
A semiring $S$ is said to be a left skew-ring if it is isomorphic
to a direct product of a left zero semiring and a skew-ring.
\end{Definition}

\noindent We recall that a left group is isomorphic to a direct
product of a left zero semigroup and a group. Thus, if a semiring
$(S, +, \cdot)$  is a left skew-ring then the semiring $(S, +,
\cdot)$ is a rectangular skew-ring and the semigroup $(S, +)$ is
a left group and conversely.

\begin{Theorem}
\label{leftskew}
The following conditions  are equivalent on a semiring $(S, +, \cdot)$ :

(i) $S$ is a b-lattice of nil-extensions of left skew-rings.

(ii) $S$ is a quasi completely regular semiring and for every $e,
f \in E^+(S)$, there exists $n \in \mathbb{N}$ such that $n(e+f) =
n(e+f+e)$.

(iii) $S$ is additively quasi regular, $b^2 \,
\mathscr{H}^{*^{+}} \, b$ for all $b \in S$ and $a=a+x+a$ implies
$a+x=a+2x+a$.
\end{Theorem}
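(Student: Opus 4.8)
The plan is to mirror the structure of the proof of Theorem~\ref{rskew}, replacing the ``rectangular group'' ingredients by their ``left group'' analogues and the orthodoxy condition $n(e+f)=(n+1)(e+f)$ by the left-group condition $n(e+f)=n(e+f+e)$. Concretely, I would establish the cycle (i)$\Longrightarrow$(ii)$\Longrightarrow$(iii)$\Longrightarrow$(i).

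For (i)$\Longrightarrow$(ii): if $S$ is a b-lattice of nil-extensions of left skew-rings, then since a left skew-ring is in particular a rectangular (indeed orthodox completely simple) skew-ring, $S$ is a b-lattice of completely Archimedean semirings, so by Theorem~\ref{qcrs} it is quasi completely regular. For the idempotent identity, I would pass to the additive reduct: $(S,+)$ is then a semilattice of nil-extensions of left groups, and the relevant semigroup-theoretic characterization (the left-group analogue of Theorem X.2.1 in~\cite{bs}, i.e.\ Proposition/Theorem X.2.1 there for left groups) gives, for all $e,f\in E^+(S)$, an $n\in\mathbb{N}$ with $n(e+f)=n(e+f+e)$.

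For (ii)$\Longrightarrow$(iii): quasi complete regularity immediately yields that $S$ is additively quasi regular and $b^2\,\mathscr{H}^{*^{+}}\,b$ for all $b\in S$. If $a=a+x+a$, then $a\in Reg^+S$, and again invoking the left-group form of Theorem X.2.1~\cite{bs}, the hypothesis $n(e+f)=n(e+f+e)$ on additive idempotents translates into the identity $a+x=a+2x+a$ for additively regular $a$ with $a=a+x+a$.

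For (iii)$\Longrightarrow$(i): from $S$ additively quasi regular with $b^2\,\mathscr{H}^{*^{+}}\,b$ and the implication $a=a+x+a\Rightarrow a+x=a+2x+a$, the semigroup characterization (Theorem X.2.1~\cite{bs}) makes $(S,+)$ a semilattice of nil-extensions of left groups, hence a GV-semigroup; then, exactly as in the proof of Theorem~\ref{rskew}, I would show every additively regular element of $S$ is completely regular: take $a=a+x+a$, replace $x$ by an element $y$ with $a=a+y+a$ and $a+y=y+a$ (available since $(S,+)$ is completely regular on its regular part), then use that $\mathscr{H}^{*^{+}}$ is a congruence on $(S,\cdot)$ and that each $\mathscr{H}^{*^{+}}$-class has a unique additive idempotent to conclude $a(a+y)=a+y$. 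Thus $S$ is quasi completely regular, hence by Theorem~\ref{qcrs} a b-lattice of completely Archimedean semirings; the extra hypothesis $a=a+x+a\Rightarrow a+x=a+2x+a$ forces each additive component to be a nil-extension of a left group (not merely a rectangular group), so $S$ is a b-lattice of nil-extensions of left skew-rings.

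The main obstacle I anticipate is purely bookkeeping at the semigroup level: verifying that the left-group version of Theorem X.2.1~\cite{bs} indeed records the identity $n(e+f)=n(e+f+e)$ (and its consequence $a+x=a+2x+a$) as the precise condition distinguishing ``nil-extensions of left groups'' from ``nil-extensions of rectangular groups'' inside the class of semilattices of nil-extensions of completely simple semigroups. Once that semigroup fact is in hand, the semiring-level argument is a routine adaptation of the proof of Theorem~\ref{rskew}, with the completely-regular upgrade of additively regular elements handled verbatim by the $\mathscr{H}^{*^{+}}$-congruence argument given there.
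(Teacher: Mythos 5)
Your proposal matches the paper's proof: the same cycle (i)$\Rightarrow$(ii)$\Rightarrow$(iii)$\Rightarrow$(i), with the additive reduct $(S,+)$ handled by the left-group analogue of Bogdanovi\'c's result (the paper cites Theorem X.2.2 of \cite{bs} rather than X.2.1) and the semiring-level work delegated to Theorem~\ref{rskew}. The only cosmetic difference is in (iii)$\Rightarrow$(i): instead of re-running the $\mathscr{H}^{*^{+}}$-congruence argument inline as you do, the paper simply computes $a=a+x+a=(a+2x+a)+a=a+2x+2a$, invokes Theorem~\ref{rskew}(iii)$\Rightarrow$(i) as a black box to get a b-lattice of nil-extensions of rectangular skew-rings, and then upgrades each component from rectangular to left via the semigroup theorem.
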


\begin{proof}
(i) $\Longrightarrow$ (ii):\, Let $S$ be a b-lattice $Y$ of
nil-extensions of left skew-rings $S_{\alpha}$ ($\alpha \in Y$).
Then  $S$ is a b-lattice of nil-extensions of rectangular
skew-rings. Hence by Theorem~\ref{rskew}, $S$ is a quasi
completely regular semiring. Again $(S, +)$ is a semilattice of
nil-extensions of left groups $(S_{\alpha}, +)$ ($\alpha \in Y$).
Hence by Theorem X.2.2 ~\cite{bs}, it follows that for every $e,
f \in E^+(S)$, there exists a positive integer $n$ such that
$n(e+f) = n(e+f+e)$.

(ii) $\Longrightarrow$ (iii): \, Let $S$ be a quasi completely
regular semiring and for every $e, f \in E^+(S)$, there exists $n
\in \mathbb{N}$ such that $n(e+f) = n(e+f+e)$. Then clearly $S$ is
additively quasi regular and $b^2 \, \mathscr{H}^{*^{+}} \, b$
for all $b \in S$. Again $(S, +)$ is a GV-semigroup such that for
every $e, f \in E^+(S)$, there exists $n \in \mathbb{N}$ such that
$n(e+f) = n(e+f+e)$. Hence by Theorem X.2.2 ~\cite{bs}, follows
that $a=a+x+a$ implies  $a+x = a+2x+a$.

(iii) $\Longrightarrow$ (i): \, Let $S$ be an additively quasi
regular such that $b^2 \, \mathscr{H}^{*^{+}} \, b$ for all $b \in
S$ and $a=a+x+a$ implies $a+x=a+2x+a$. Then $a=a+x+a = (a+x)+a =
(a+2x+a)+a = a+2x+2a$. So by Theorem~\ref{rskew}, it follows that
$S$ is a b-lattice $Y$ of nil-extensions of rectangular
skew-rings $S_{\alpha}$ ($\alpha \in Y$). By Theorem X.2.2
~\cite{bs}, it follows that $(S_{\alpha}, +)$ is a left group.
Hence $S_{\alpha}$ is a left skew-ring. Consequently, $S$ is a
b-lattice of nil-extensions of left skew-rings.
\end{proof}

\begin{Theorem}
The following conditions  are equivalent on a semiring $(S, +, \cdot)$ :

(i) $S$ is a b-lattice of nil-extensions of left skew-rings and
$E^+(S)$ is a subsemigroup of $(S,+)$.

(ii) $S$ is a quasi completely regular semiring and for every
$e,f \in E^+(S)$, $e+f = e+f+e$.
\end{Theorem}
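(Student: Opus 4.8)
The plan is to read both implications off Theorem~\ref{leftskew}, whose hypothesis ``for every $e,f\in E^+(S)$ there exists $n\in\mathbb{N}$ with $n(e+f)=n(e+f+e)$'' sits squarely between the two conditions stated here. In both directions the ``semiring part'' (quasi complete regularity; being a b-lattice of nil-extensions of left skew-rings) transfers verbatim through Theorem~\ref{leftskew}, so the real work is confined to the clauses about $E^+(S)$.

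For (i)~$\Longrightarrow$~(ii), I would first apply Theorem~\ref{leftskew} to conclude that $S$ is quasi completely regular and that for each $e,f\in E^+(S)$ some $n\in\mathbb{N}$ satisfies $n(e+f)=n(e+f+e)$. Since $E^+(S)$ is a subsemigroup of $(S,+)$, the element $e+f$ is an additive idempotent, and so is $(e+f)+e$; hence $n(e+f)=e+f$ and $n(e+f+e)=e+f+e$, and the previous equality collapses to $e+f=e+f+e$.

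For (ii)~$\Longrightarrow$~(i), taking $n=1$ shows that the hypothesis $e+f=e+f+e$ is precisely condition~(ii) of Theorem~\ref{leftskew}, whence $S$ is a b-lattice of nil-extensions of left skew-rings. It then remains only to check that $E^+(S)$ is a subsemigroup, which I would obtain from the computation
\[
(e+f)+(e+f)=(e+f+e)+f=(e+f)+f=e+(f+f)=e+f,
\]
using the hypothesis applied to $e,f$ and then the idempotency of $f$; thus $e+f\in E^+(S)$.

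As for the main obstacle: there is essentially none of substance once Theorem~\ref{leftskew} is in hand, since both directions reduce to one-line associativity/idempotency manipulations. The only spots demanding a moment's care are noticing in (i)~$\Longrightarrow$~(ii) that $(e+f)+e$ (not merely $e+f$) is an additive idempotent — which holds exactly because $E^+(S)$ is closed under addition — and, dually, bracketing $e+f+e+f$ as $(e+f+e)+f$ in (ii)~$\Longrightarrow$~(i) so that the hypothesis can be applied.
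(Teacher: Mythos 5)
Your proposal is correct and follows essentially the same route as the paper: both directions reduce to Theorem~\ref{leftskew}, with (i)~$\Longrightarrow$~(ii) using closure of $E^+(S)$ to collapse $n(e+f)=n(e+f+e)$ to $e+f=e+f+e$, and (ii)~$\Longrightarrow$~(i) verifying $2(e+f)=e+f$ by the same associativity/idempotency computation the paper uses. No substantive differences.
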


\begin{proof}
(i) $\Longrightarrow$ (ii):\, Let $S$ be a b-lattice of
nil-extensions of left skew-rings and $E^+(S)$ is a subsemigroup
of $(S, +)$. Then by Theorem ~\ref{leftskew}, it follows that $S$
is a quasi completely regular semiring and for every $e, f \in
E^+(S)$, there exists a positive integer $n$ such that $n(e+f) =
n(e+f+e)$. Now since $E^+(S)$ is a subsemigroup of $(S, +)$, we
have $e+f, e+f+e \in E^+(S)$. Hence $e+f = n(e+f) = n(e+f+e) =
e+f+e$.

(ii) $\Longrightarrow$ (i):\, Let  $S$ be a quasi completely
regular semiring and for every $e,f \in E^+(S)$, $e+f = e+f+e$.
Then by Theorem ~\ref{leftskew}, we have $S$ is a b-lattice of
nil-extensions of left skew-rings. Also for every $e, f \in
E^+(S)$, $2(e+f) = e+f+e+f = e+f+f = e+f$ which implies that $e+f
\in E^+(S)$, i.e.,  $E^+(S)$ is a subsemigroup of $(S,+)$.
\end{proof}

\bibliographystyle{amsplain}

\end{document}